\documentclass[a4paper,12pt]{amsart}
\newif\ifprivate
\ifprivate
\usepackage{svn}
\SVN $Revision: 28 $
\SVN $Date: 2008-04-04 15:18:25 +0200 (Fre, 04 Apr 2008) $
\fi
\usepackage{a4wide,url,ifthen,graphicx}
\usepackage{hyperref} 
\usepackage{algorithm,algorithmic}

\newcommand{\assign}{\leftarrow}
\newcommand{\Binary}{\mathop{\mathsf{Binary}}}
\newcommand{\bfc}{\mathbf{c}}
\newcommand{\bfd}{\mathbf{d}}
\newcommand{\bfn}{\mathbf{n}}
\newcommand{\bfnull}{\mathbf{0}}
\newcommand{\bfq}{\mathbf{q}}
\newcommand{\bfr}{\mathbf{r}}
\newcommand{\calC}{\mathcal{C}}
\newcommand{\downto}{\textbf{downto}}
\renewcommand{\MR}[1]{}
\newcommand{\N}{\mathbb{N}}
\newcommand{\NAF}{\mathop{\mathsf{NAF}}}
\newcommand{\SJSF}{\mathop{\mathsf{SJSF}}}
\newcommand{\val}{\mathop{\mathsf{value}}}
\newcommand{\weight}{\mathop{\mathsf{weight}}\nolimits}
\newcommand{\zp}[2]{\genfrac{}{}{0pt}{}{#1}{#2}}
\newcommand{\zeros}{\mathop{\mathsf{zeros}}\nolimits}

\newcommand{\klammern}[4][]%
{\ifthenelse{\equal{#1}{}}{\left#2}{\csname#1\endcsname#2}%
#4\ifthenelse{\equal{#1}{}}{\right#3}{\csname#1\endcsname#3}}

\newcommand{\abs}[2][]{\klammern[#1]{\lvert}{\rvert}{#2}}
\newcommand{\expect}[2][]{\mathbb{E}\klammern[#1]{(}{)}{#2}}
\newcommand{\floor}[2][]{\klammern[#1]{\lfloor}{\rfloor}{#2}}
\newcommand{\prob}[2][]{\mathbb{P}\klammern[#1]{(}{)}{#2}}

\newtheorem{theorem}{Theorem}
\newtheorem{lemma}{Lemma}[section]
\theoremstyle{definition}
\newtheorem{definition}[lemma]{Definition}
\newtheorem{corollary}[lemma]{Corollary}
\theoremstyle{remark}
\newtheorem{remark}[lemma]{Remark}

\newif\ifpdf
\ifx\pdfoutput\undefined
    \pdffalse          
\else
\ifthenelse{\equal{\the\pdfoutput}{1}}{\pdftrue}{\pdffalse}
\fi

\ifpdf
\DeclareGraphicsRule{*}{mps}{*}{}
\fi

\title[Complements and Signed Digit Representations]{Complements and Signed
  Digit Representations: Analysis of a Multi-Exponentiation-Algorithm
  of Wu, Lou, Lai and Chang}
\author{Clemens Heuberger}
\address{Institut f\"ur Mathematik B\\Technische Universit\"at Graz\\Austria}
\email{clemens.heuberger@tugraz.at}
\thanks{This paper was written while C.~Heuberger 
was a visitor at the
  Center of Experimental Mathematics at the University of Stellenbosch. He
  thanks the center
 for its hospitality. He is also supported
by the Austrian
Science Foundation FWF, project S9606, that is part of the
Austrian National Research Network ``Analytic Combinatorics
and Probabilistic Number Theory.''}
\author{Helmut Prodinger}
\address{Department of Mathematics\\University of Stellenbosch\\South Africa}
\email{hproding@sun.ac.za}
\thanks{H.~Prodinger is supported by the NRF grant 2053748 
of the South African National Research Foundation and by
the Center of Experimental Mathematics of the University of Stellenbosch.}

\keywords{Signed digit representation; Multi-exponentiation; Complement;
  Non-Adjacent-Form; Canonical signed digit representation}
\subjclass[2000]{11A63; 
68W40 
94A60
}

\begin{document}
\begin{abstract}
  Wu, Lou, Lai and Chang proposed a multi-exponentiation algorithm using binary complements and the non-adjacent form. The purpose of this paper is to show that neither the analysis of the algorithm given by its original proposers nor that by other authors are correct. In fact it turns out that the complement operation does not have significant influence on the performance of the algorithm and can therefore be omitted.
\end{abstract}

\maketitle
\ifprivate \thispagestyle{headings}\pagestyle{headings} \markboth{\jobname{}
rev. \SVNRevision{} ---
  \SVNDate{} \SVNTime}{\jobname{} rev. \SVNRevision{} --- \SVNDate{} \SVNTime}
\fi

\section{Introduction}
An efficient way to compute
a power  $a^n$ is to use the binary expansion $\sum_j d_j2^j$ of $n$ and
compute $a^n$ by a square and multiply algorithm~\cite{Knuth:1998:Art:2},
\begin{equation*}
  a^{\sum_{j=0}^{\ell-1} d_j2^j}=(((a^{d_\ell-1})^2\cdot a^{d_{\ell-2}})^2\cdots a^{d_1})^2a^{d_0},
\end{equation*}
where the number of squarings needed is  $\ell-1$, whereas the number of
multiplications by $a^{d_j}$ equals to the number of nonzero $d_j$ minus $1$ (under the
assumption that $d_{\ell-1}=1$), because multiplications with $a^0$ can be
omitted. Among the various possible optimisations is the use of signed digit
representations, i.e., allowing digits $-1$ also, which results in
multiplications by $a^{-1}$. This is of particular interest if $a^{-1}$ is
known or can be computed easily, e.g., in the point group of an elliptic curve.

Some cryptosystems also need multi-exponentiation $\prod_{j=1}^D a_j^{n_j}$
(usually for $D\in\{2,3\}$). A trivial approach would be to compute $a_j^{n_j}$ separately
for $j\in\{1,\ldots,D\}$ and multiply the results, however,
Straus\footnote{This approach is frequently called
  Shamir's trick, we refer to \cite{Bernstein:2002:expon-algor} for a
  discussion of this attribution. Similar suggestions have been made in
  \cite{Pekmestzi:1989:compl-number-multip} and
  \cite{Dimitrov-Jullien-Miller:2000:compl-fast-algor-multiex}.}~\cite{Straus:1964:addit}
 demonstrated that an interleaved approach
leads to better results. For simplicity of exposition, we restrict ourselves to
$D=2$ at this point, although the method works for arbitrary $D$. For computing
$a^mb^n$, we take binary expansions $m=\sum_{j=0}^{\ell-1}c_j 2^j$ and
$n=\sum_{j=0}^{\ell-1}d_j 2^j$ and compute 
\begin{equation*}
  a^{\sum_{j=0}^{\ell-1}c_j 2^j}b^{\sum_{j=0}^{\ell-1}d_j
    2^j}=(((a^{c_{\ell-1}}b^{d_{\ell-1}})^2a^{c_{\ell-2}}b^{d_{\ell-2}})^2\ldots a^{c_{1}}b^{d_{1}})^2a^{c_{0}}b^{d_{0}}.
\end{equation*}
If $a^cb^d$ are precomputed for all admissible pairs of digits $(c,d)$, then
the number of squarings equals $\ell-1$ and the number of multiplications
by $a^{c_j}b^{d_j}$ equals the joint Hamming weight, i.e., the number of pairs $(c_j,d_j)\neq (0,0)$, minus one (under the
assumption that $(c_{\ell-1},d_{\ell-1})\neq (0,0)$) plus the time needed for
the precomputation, which is clearly constant and does not depend on the length
of the expansion. In dimension $D=2$, pairs of integers can be identified with
complex numbers as proposed in \cite{Pekmestzi:1989:compl-number-multip}, but
this is merely an other way to formulate the procedure. Allowing negative digits again, redundancy can be used to
decrease the joint Hamming weight.

As in the case of dimension $1$, there is a syntactic condition which yields
expansions of minimal joint Hamming weight, cf.\
\cite{Solinas:2001:low-weigh},
\cite{Grabner-Heuberger-Prodinger:2004:distr-results-pairs},
\cite{Proos:2003:joint-spars}. In dimension $D=2$, it is shown that these
optimal expansions have expected Hamming weight $(1/2)\ell+O(1)$, so that the
total expected number of multiplications equals\footnote{The authors of
  \cite{Wu-Lou-Lai-Chang:2007:fast-modul} and
  \cite{Sun-Huai-Sun-Zhang:2007:comput-effic} erroneously write $1.503\ell$ without further
comment.} $(3/2)\ell+O(1)$, cf.\ also
\cite{Avanzi:2005:multi-expon-crypt} and
\cite{Grabner-Heuberger-Prodinger-Thuswaldner:2005:analy-linear}. We refer
to \cite{Heuberger-Muir:2007:minim-weigh} for a more detailed introduction
with more references.

The authors of \cite{Wu-Lou-Lai-Chang:2007:fast-modul} present an
alternative approach in dimension $D$ involving the complement of the binary expansion and claim
that the expected number of multiplications of their algorithm equals
$1.304\ell+O(1)$. This was followed by
\cite{Sun-Huai-Sun-Zhang:2007:comput-effic} whose authors claim to correct the
result \cite{Wu-Lou-Lai-Chang:2007:fast-modul} and that the same
algorithm needs $1.471\ell+O(1)$ multiplications on average. The purpose of this note is to
show that both results are incorrect. 
We explain why the result
cannot be better than the above  optimal joint expansions
(Theorem~\ref{th:extended-optimality}), show that the algorithm essentially
corresponds to taking the NAF for both arguments
(Theorem~\ref{th:weight-difference}) and give the correct expected number
$(14/9)\ell+O(1)=1.555\ldots\ell+O(1)$ of multiplications (Theorem~\ref{th:markov-chain-analysis}). 

In Section~\ref{sec:digit-expansions}, we collect notations and well-known
results on digit expansions. Section~\ref{sec:algorithm} presents the algorithm
proposed by \cite{Wu-Lou-Lai-Chang:2007:fast-modul}, which is analysed in
Section~\ref{sec:analysis}. Finally, in Section~\ref{sec:errors}, we discuss
where the errors in the probabilistic arguments of
\cite{Wu-Lou-Lai-Chang:2007:fast-modul}  and
\cite{Sun-Huai-Sun-Zhang:2007:comput-effic} lie.

\section{Digit Expansions}\label{sec:digit-expansions}
\subsection{Digit Expansions of Integers}
A signed digit expansion of an integer $n$ is a word $d_{\ell-1}\ldots d_0$
over the alphabet $\{-1,0,1\}$ such that $n=\val(d_{\ell-1}\ldots
d_0)=\sum_{j=0}^{\ell-1}d_j2^j$. The \emph{(Hamming) weight}
$\weight(d_{\ell-1}\ldots d_0 )$ of $d_{\ell-1}\ldots d_0$ is the number of
non-zero digits $d_j$.

When all digits are in $\{0,1\}$, we speak of the \emph{standard binary
  expansion} of $n$ (which must then be non-negative). The standard binary
expansion of $n$ is denoted by $\Binary(n)$.

While every integer $n$ admits infinitely many
signed digit expansions, one special expansion has attracted particular
attention.

\begin{definition}
  A signed digit expansion $d_{\ell-1}\ldots d_0$ is called a
  \emph{Non-Adjacent-Form (NAF)}, if $d_jd_{j+1}=0$ for all $j$, i.e., there
  are no adjacent non-zero digits.
\end{definition}
Reitwiesner~\cite{Reitwiesner:1960} showed that every integer $n$ admits a unique
NAF, denoted by $\NAF(n)$,  and that $\NAF(n)$ minimises the Hamming weight over all signed digit
expansions of $n$. The NAF is known under various names, e.g., the
\emph{canonical signed digit expansion}.

The ones' complement of a standard binary expansion $d_{\ell-1}\ldots d_0$ is
$\widehat{d_{\ell-1}}\ldots \widehat{d_0}$, where 
\begin{equation*}
  \widehat{d}=1-d=
  \begin{cases}
    0&d=1,\\
    1&d=0.
  \end{cases}
\end{equation*}
It is immediate from the definition that
\begin{equation*}
  \val(d_{\ell-1}\ldots d_0)=2^{\ell}-\val(\widehat{d_{\ell-1}}\ldots
  \widehat{d_0})-1.
\end{equation*}
This can be seen as another signed digit expansion,
\begin{equation*}
  \val(d_{\ell-1}\ldots
  d_0)=\val(1(-\widehat{d_{\ell-1}})\ldots(-\widehat{d_1})(-\widehat{d_0}-1)),
\end{equation*}
with the exception that the least significant digit is now in $\{-1,-2\}$. 

\subsection{Digit Expansion of Vectors}
A signed digit joint expansion of an integer vector $(\zp mn)$ is a word
$\bfd^{(\ell-1)}\ldots\bfd^{(0)}$ over the alphabet $\{-1,0,1\}^2$ such that
\begin{equation*}
  \left(\zp mn\right)=\val(\bfd^{(\ell-1)}\ldots\bfd^{(0)})=\sum_{j=0}^{\ell-1} \bfd_j 2^j.
\end{equation*}
The \emph{joint (Hamming) weight} of $\bfd^{(\ell-1)}\ldots\bfd^{(0)}$ is the number
of $j$ with $\bfd^{(j)}\neq \bfnull:=(\zp 00)$. The components of $\bfd^{(j)}$
are written as $(\zp{d^{(j)}_1}{d^{(j)}_2})$.

So, the digits are now column vectors. One simple way to obtain such a joint
expansion is to independently choose two signed-binary expansions
$d^{(\ell-1)}_1\ldots d^{(0)}_1$ and $d^{(\ell-1)}_2\ldots d^{(0)}_2$ of $m$
and $n$, respectively, and to write them on top of each other. In order to
achieve small joint weight, one might take the NAFs of $m$ and $n$ and write
them on top of each other; the expected joint weight is $(5/9)\ell+O(1)$,
cf.\ \cite{Grabner-Heuberger-Prodinger-Thuswaldner:2005:analy-linear}.\footnote{The
heuristic argument to see this would be that the expected number of zeros in a NAF is
$(2/3)\ell+O(1)$, thus the expected number of digits vectors $\bfnull$ when
writing two NAFs on top of each other should be $(4/9)\ell+O(1)$, which leaves
$(5/9)\ell+O(1)$ for the Hamming weight. As we shall see when discussing the
errors in \cite{Wu-Lou-Lai-Chang:2007:fast-modul} and
\cite{Sun-Huai-Sun-Zhang:2007:comput-effic}, such arguments do not take
possible dependence of the digits into account and might lead to
errors. Therefore, we refer to the precise analysis in
\cite{Grabner-Heuberger-Prodinger-Thuswaldner:2005:analy-linear}.}

Solinas~\cite{Solinas:2001:low-weigh} discussed the ``Joint Sparse Form'', a
joint expansion which minimises the joint weight over all joint expansions of
the same pair of integers. Here, we use a simplified version introduced in
\cite{Grabner-Heuberger-Prodinger:2004:distr-results-pairs}.
\begin{definition}
  A joint expansion $\bfd^{(\ell-1)}\ldots\bfd^{(0)}$ is called a \emph{Simple
    Joint Sparse Form (SJSF)}, if the following two conditions hold for all
  $j\ge 0$:
  \begin{gather}
    \text{If }\abs[normal]{d^{(j)}_1}\neq\abs[normal]{d^{(j)}_2}\text{, then }\abs[normal]{d^{(j+1)}_1}=\abs[normal]{d^{(j+1)}_2},\label{eq:simple-joint-sparse-form:syntax-10}\\
    \text{If }\abs[normal]{d^{(j)}_1}=\abs[normal]{d^{(j)}_2}=1\text{, then } \bfd^{(j+1)}=\bfnull.\label{eq:simple-joint-sparse-form:syntax-11}
  \end{gather}
\end{definition}
In \cite{Grabner-Heuberger-Prodinger:2004:distr-results-pairs}, we proved that
every pair of integers $(\zp mn)$ admits exactly one SJSF, denoted by
$\SJSF(\zp mn)$, and that $\SJSF(\zp mn)$
minimises the joint weight over all joint expansions of $(\zp mn)$ with digits in
$\{-1,0,1\}$. In
\cite{Grabner-Heuberger-Prodinger-Thuswaldner:2005:analy-linear}, it was shown
that the expected joint weight of a
SJSF of length $\ell$ is $(1/2)\ell+O(1)$.

\section{Multi-Exponentiation Algorithm}\label{sec:algorithm}
We now present the algorithm of \cite{Wu-Lou-Lai-Chang:2007:fast-modul} in
Algorithm~\ref{alg:Wu-Lou-Lai-Chang}. We assume that $a_1^{d_1} a_2^{d_2}$ have
been precomputed for $(d_1,d_2)\in \{-1,0,1\}^2$ and are used in
Lines~\ref{line:multiply-most-significant-digits} and \ref{line:multiply}. It
might happen that $d_k^{(0)}=-2$, in that case, two multiplications are needed
in Line~\ref{line:multiply}. Note that the length of the NAF may exceed the
length of the standard binary expansion by at most $1$.

\begin{algorithm}[htbp]
  \begin{algorithmic}[1]
    \REQUIRE $a_1$, $a_2\in G$ (some Abelian group), $n_1$, $n_2\in \N$
    \ENSURE $b=a_1^{n_1}a_2^{n_2}$
    \STATE $\ell=\floor{\log_2(\max(n_1,n_2))}+1$
    \FOR{$k=1,2$}
      \STATE $b^{(\ell-1)}_k\ldots b^{(0)}_k \assign \Binary(n_k)$
      \IF{$\weight(b^{(\ell-1)}_k\ldots b^{(0)}_k)>\ell/2$}
      \STATE $(d^{(\ell)}_k\ldots d^{(0)}_k)\assign \NAF(\val((b^{(\ell-1)}_k-1)\ldots (b^{(0)}_k-1)))$
      \STATE $d^{(\ell)}_k \assign d^{(\ell)}_k+1$
      \STATE $d^{(0)}_k \assign d^{(0)}_k-1$
      \ELSE
      \STATE $(d^{(\ell)}_k\ldots d^{(0)}_k)\assign \NAF(n_k)$
      \ENDIF
      \STATE \COMMENT{We have $n_k=\val(d^{(\ell)}_k\ldots d^{(0)}_k)$}
    \ENDFOR\label{line:recoding-completed}
    \STATE $b\assign a_1^{d_1^{(\ell)}}a_2^{d_2^{(\ell)}}$ \label{line:multiply-most-significant-digits}
    \FOR{$j=\ell-1$ \downto{} $0$}
      \STATE $b\assign b^2 a_1^{d_1^{(j)}}a_2^{d_2^{(j)}}$ \label{line:multiply}
      \STATE \COMMENT{We have $b=a_1^{\sum_{k=j}^\ell d_1^{(k)}2^{k-j}}a_2^{\sum_{k=j}^\ell d_2^{(k)}2^{k-j}}$}
    \ENDFOR
  \end{algorithmic}
  \caption{Wu, Lou, Lai and Chang's~\cite{Wu-Lou-Lai-Chang:2007:fast-modul}
    Algorithm for Multi-Exponentiation}
  \label{alg:Wu-Lou-Lai-Chang}
\end{algorithm}

The idea of the algorithm is the following: If the weight of the binary
expansion of the exponent $n_j$ is large ($>\ell/2$), then $n_j$ is represented
by its complement and the NAF of the complement is used. The heuristic is that
reducing the weight of the expansion before converting it to its NAF should
result in a lower weight of the NAF.

Note that after execution of Line~\ref{line:recoding-completed} of
Algorithm~\ref{alg:Wu-Lou-Lai-Chang}, we have a joint expansion
$\bfd^{(\ell)}\ldots\bfd^{(0)}$ of $\mathbf{n}=(\zp{n_1}{n_2})$ where
$\bfd^{(j)}\in\{-1,0,1\}^2$ for $j>0$ and $\bfd^{(0)}\in\{-2,-1,0,1\}^2$. The number of
group multiplications is $\ell$ (for the squarings) plus
\begin{equation*}
  \weight_1(\bfd^{(\ell)}\ldots\bfd^{(0)}):=\sum_{j=0}^{\ell}\max\{\abs[normal]{d^{(j)}_k} : k\in\{1,2\}\}
\end{equation*}
minus $1$ (no multiplication is required for the most significant digit).
Note that for expansions with digits $\{-1,0,1\}$, the notions of $\weight_1$
and $\weight$ agree. 

\section{Analysis of the Algorithm}\label{sec:analysis}

We will now extend the optimality proof for the SJSF from 
\cite{Grabner-Heuberger-Prodinger:2004:distr-results-pairs} to the case of
digits from $\{-2,-1,0,1,2\}$. It turns out that we can allow arbitrary digits
of absolute value at most $2$ without changing the result. In fact, we do not
even need any particular properties of the SJSF.

\begin{theorem}\label{th:extended-optimality}
  Let $\bfd^{(\ell-1)}\ldots\bfd^{(0)}$ be a word over the alphabet
  $\{-2,-1,0,1,2\}^2$ and $\bfn=\val(\bfd^{(\ell-1)}\ldots\bfd^{(0)})$. Then we
  have
  \begin{equation*}
    \weight_1(\bfd^{(\ell-1)}\ldots\bfd^{(0)})\ge
    \weight_1(\SJSF(\bfn))=\weight(\SJSF(\bfn)),
  \end{equation*}
  i.e., $\SJSF(\bfn)$ minimises
  $\weight_1$ over all expansions of $\bfn$ with digits in $\{-2,-1,0,1,2\}$.
\end{theorem}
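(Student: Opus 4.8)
The plan is to mimic and extend the optimality argument for the SJSF from \cite{Grabner-Heuberger-Prodinger:2004:distr-results-pairs}, which typically proceeds by a well-chosen induction (or a potential/amortized-cost argument) on the length $\ell$ of the expansion. Concretely, I would argue by induction on $\ell$, and in the inductive step I would look at the least significant digit vector $\bfd^{(0)}\in\{-2,-1,0,1,2\}^2$ and relate $\bfn=\val(\bfd^{(\ell-1)}\ldots\bfd^{(0)})$ to a ``shifted-down'' vector $\bfn'$ obtained by removing $\bfd^{(0)}$ and dividing by $2$ (which forces a case distinction according to the parities/residues of $n_1,n_2$ modulo $4$, since digits of absolute value $2$ mean we may need to look two bits deep). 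The SJSF of $\bfn$ is built from the SJSF of a suitable $\bfn'$ by prepending one digit vector, and the syntactic conditions \eqref{eq:simple-joint-sparse-form:syntax-10}--\eqref{eq:simple-joint-sparse-form:syntax-11} guarantee that this prepended digit vector ``costs'' at most what any expansion of $\bfn$ must pay for its low-order part.

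The key steps, in order, are: (1) establish the base case ($\ell\le 1$ or $\ell\le 2$, depending on how many low bits one needs to inspect) by direct enumeration; (2) given a word $\bfd^{(\ell-1)}\ldots\bfd^{(0)}$ over $\{-2,\dots,2\}^2$ of $\weight_1$ equal to $w$, split off its low-order part: if $\bfd^{(0)}=\bfnull$ then $\bfn=2\bfn'$ with $\bfn'=\val(\bfd^{(\ell-1)}\ldots\bfd^{(1)})$ and $\weight_1$ is unchanged, so induction applies directly; (3) if $\bfd^{(0)}\neq\bfnull$, show that $\weight_1(\bfd^{(\ell-1)}\ldots\bfd^{(0)})\ge 1+\weight_1(\text{some expansion of }\bfn')$ for an appropriate $\bfn'$ with $\bfn\equiv \bfd^{(0)}$ in the relevant low bits, and on the other side show $\weight(\SJSF(\bfn))\le 1+\weight(\SJSF(\bfn'))$, i.e.\ that prepending the forced SJSF digit(s) increases the weight by at most the number of nonzero positions thereby introduced; (4) chain the two inequalities through the induction hypothesis $\weight_1(\text{expansion of }\bfn')\ge\weight(\SJSF(\bfn'))$. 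The equality $\weight_1(\SJSF(\bfn))=\weight(\SJSF(\bfn))$ is immediate because an SJSF has digits in $\{-1,0,1\}$, where $\weight_1$ and $\weight$ coincide, as already noted in the excerpt.

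I expect the main obstacle to be the bookkeeping in step (3): with digits of absolute value up to $2$, carrying when stripping the least significant digit can propagate, so ``$\bfn'$'' is not simply $(\bfn-\bfd^{(0)})/2$ with digits still in the nice range unless one is careful, and one must verify componentwise that the resulting carry does not push a coordinate of some intermediate digit outside $\{-2,-1,0,1,2\}$ — or, better, set up the induction so that the statement to be proved is already closed under this operation (for instance, by phrasing it for the value $\bfn$ directly and invoking the SJSF's existence/uniqueness and its recursive ``peeling'' description rather than manipulating digit strings). A clean way around the carry bookkeeping is to avoid digit manipulation on the left-hand side altogether: observe $\val(\bfd^{(\ell-1)}\ldots\bfd^{(0)})\bmod 4$ already determines enough about any minimal-weight expansion, and compare $\weight_1$ of the given word against a lower bound obtained purely from the residue class of $\bfn$ modulo $4$ (namely, how many of the two coordinates are forced to contribute to the low two positions), then peel off and recurse. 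The other place demanding care is ensuring the case analysis is exhaustive: there are essentially $\bfn\bmod 4\in\{0,1,2,3\}^2$, i.e.\ sixteen cases, but symmetry (sign flips and swapping the two coordinates) and the observation that only the parity pattern and the ``$\equiv 2\pmod 4$'' pattern matter should collapse these to a handful that can be checked against the SJSF conditions \eqref{eq:simple-joint-sparse-form:syntax-10}--\eqref{eq:simple-joint-sparse-form:syntax-11} directly.
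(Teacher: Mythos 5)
Your route is genuinely different from the paper's, and as it stands it has a gap at exactly the point you flag. The crux is your step (3): you need, for one and the same $\bfn'$, both $\weight_1(\bfd^{(\ell-1)}\ldots\bfd^{(0)})\ge 1+\weight_1(\text{some expansion of }\bfn')$ and $\weight(\SJSF(\bfn))\le 1+\weight(\SJSF(\bfn'))$. With digits of absolute value $2$ these two sides naturally produce \emph{different} values of $\bfn'$: the truncation of the given word has value $(\bfn-\bfd^{(0)})/2$, while the SJSF recursion peels a digit determined by $\bfn$ modulo $2$ (or $4$). Concretely, take $\bfd^{(0)}=(\zp 20)$: then $\bfn$ is even in both coordinates, the SJSF begins with $\bfnull$ at cost $0$ and recurses on $\bfn/2$, whereas your word paid $1$ at position $0$ and its truncation has value $\bfn/2-(\zp10)$, not $\bfn/2$; the induction hypothesis therefore does not chain without absorbing the discrepancy as a carry into the truncated word and re-doing the weight accounting, and this carry can itself create new digits of absolute value $2$ higher up. You also implicitly use a right-to-left recursive ``peeling'' description of the SJSF with the property that each peeled digit vector costs at most one unit of weight; that is plausible but is not something you can simply read off the two syntactic conditions, so it would need to be proved as well. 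In short, the proposal re-proves SJSF optimality from scratch over the enlarged digit set, and the hard inductive step is only sketched, with the carry/mismatch problem acknowledged but not resolved.

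For contrast, the paper avoids all of this by never touching the SJSF recursion: the only new ingredient is a reduction lemma stating that any word over $\{-2,-1,0,1,2\}^2$ can be rewritten, without changing its value and without increasing $\weight_1$, into one with fewer digit vectors containing a digit of absolute value $2$. This is done locally at the most significant offending position $j$ by writing $\bfd^{(j)}=2\bfq+\bfr$ with $\bfq\in\{-1,0,1\}^2$, $\bfr\in\{0,1\}^2$, pushing $\bfq$ as a carry into $\bfd^{(j+1)}$, and inducting on $\weight_1$; iterating yields an expansion with digits in $\{-1,0,1\}$ of no larger $\weight_1$, and then the known optimality of the SJSF over $\{-1,0,1\}$ (cited from the earlier paper) finishes the proof. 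If you want to salvage your approach, the cleanest fix is to prove precisely such a digit-$2$ elimination step first and then quote the existing optimality result, rather than rebuilding the mod-$4$ case analysis for the SJSF with the larger digit set.
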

Before proving the theorem, we note that this already shows that the analysis
in \cite{Wu-Lou-Lai-Chang:2007:fast-modul} and
\cite{Sun-Huai-Sun-Zhang:2007:comput-effic} cannot be correct:

\begin{corollary}\label{cor:at-least-3-2}
  The expected number of group multiplications needed by
  Algorithm~\ref{alg:Wu-Lou-Lai-Chang} is at least $(3/2)\ell+O(1)$.
\end{corollary}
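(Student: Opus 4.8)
The plan is to read the number of group multiplications straight off the description of Algorithm~\ref{alg:Wu-Lou-Lai-Chang}, bound it from below by Theorem~\ref{th:extended-optimality}, and then take expectations. First I would recall that after Line~\ref{line:recoding-completed} the algorithm has produced a joint expansion $\bfd^{(\ell)}\ldots\bfd^{(0)}$ of $\bfn=(\zp{n_1}{n_2})$ and that, as observed at the end of Section~\ref{sec:algorithm}, the total number of group multiplications carried out equals $\ell+\weight_1(\bfd^{(\ell)}\ldots\bfd^{(0)})-1$; here the summand $\ell$ counts the squarings, the $\weight_1$ term counts the multiplications in Lines~\ref{line:multiply-most-significant-digits} and~\ref{line:multiply} (including the double multiplication caused by a digit $-2$ in position $0$, since $\abs{-2}=2$), and the $-1$ accounts for the most significant digit not requiring a multiplication.

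The crucial point is that this expansion has all entries bounded by $2$ in absolute value: $\bfd^{(j)}\in\{-1,0,1\}^2$ for $j>0$ and $\bfd^{(0)}\in\{-2,-1,0,1\}^2$. Hence $\bfd^{(\ell)}\ldots\bfd^{(0)}$ is a word over $\{-2,-1,0,1,2\}^2$, so Theorem~\ref{th:extended-optimality} applies and yields
\begin{equation*}
  \weight_1(\bfd^{(\ell)}\ldots\bfd^{(0)})\ge\weight_1(\SJSF(\bfn))=\weight(\SJSF(\bfn)).
\end{equation*}
Consequently, for every input $(n_1,n_2)$ the number of group multiplications is at least $\ell+\weight(\SJSF(\bfn))-1$.

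It then remains to take expectations. Under the usual model in which $n_1$ and $n_2$ are chosen independently and uniformly among the non-negative integers whose standard binary expansion has length at most $\ell$, the expected joint weight of $\SJSF(\bfn)$ is $(1/2)\ell+O(1)$ by \cite{Grabner-Heuberger-Prodinger-Thuswaldner:2005:analy-linear}. Since expectation is monotone, the expected number of group multiplications is at least
\begin{equation*}
  \ell+\expect{\weight(\SJSF(\bfn))}-1=\frac{3}{2}\ell+O(1),
\end{equation*}
as claimed. I do not expect a genuine obstacle here; the only points needing a little care are checking that every digit produced by the recoding — in particular the possible $-2$ in the least significant position — indeed has absolute value at most $2$, so that Theorem~\ref{th:extended-optimality} is applicable, and noting that the various length discrepancies (the $\NAF$ and the $\SJSF$ each being possibly one digit longer than $\Binary(n_k)$) affect only the $O(1)$ term.
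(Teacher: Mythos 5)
Your proof is correct and follows essentially the same route as the paper: bound the algorithm's multiplication count below via Theorem~\ref{th:extended-optimality} by the count for the SJSF, then invoke the known expected joint weight $(1/2)\ell+O(1)$ of the SJSF from \cite{Grabner-Heuberger-Prodinger-Thuswaldner:2005:analy-linear}. You merely spell out the bookkeeping ($\ell$ squarings plus $\weight_1$ minus one, the digit $-2$, length discrepancies) that the paper leaves implicit.
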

\begin{proof}[Proof of Corollary~\ref{cor:at-least-3-2}]
  For every $\bfn$, the number of multiplications used by
  Algorithm~\ref{alg:Wu-Lou-Lai-Chang} is not less then the number of
  multiplications needed when using the SJSF, which is known to be
  $(3/2)\ell+O(1)$ from \cite{Grabner-Heuberger-Prodinger-Thuswaldner:2005:analy-linear}.
\end{proof}

The essential step in the proof of Theorem~\ref{th:extended-optimality} is the
following lemma.

\begin{lemma}\label{le:digit-2-reduction}
  Let $\bfd^{(\ell-1)}\ldots\bfd^{(0)}$ be a word over the alphabet
  $\{-2,-1,0,1,2\}^2$ where $k>0$ digit vectors contain a digit of absolute value $2$. Then there is a
  word $\bfc^{(\ell'-1)}\ldots\bfc^{(0)}$ over the alphabet $\{-2,-1,0,1,2\}^2$ with less than $k$ digit vectors
  containing a digit of
  absolute value $2$, 
  $\val(\bfd^{(\ell-1)}\ldots\bfd^{(0)})=\val(\bfc^{(\ell'-1)}\ldots\bfc^{(0)})$ and
  $\weight_1(\bfd^{(\ell-1)}\ldots\bfd^{(0)})\ge\weight_1(\bfc^{(\ell'-1)}\ldots\bfc^{(0)})$.
\end{lemma}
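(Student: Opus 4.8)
The plan is to locate the \emph{most significant} position $j$ at which some digit vector $\bfd^{(j)}$ contains an entry of absolute value $2$, and to remove that occurrence by a local rewriting that propagates a carry towards the more significant end, reusing the slot $\ell-1$ (or a fresh slot $\ell$, so $\ell' \le \ell+1$) if necessary. Concretely, if $d^{(j)}_k = \pm 2$ I would like to replace it using $\pm 2 = \pm 4 \mp 2 = (\pm 1)\cdot 2^{j+1} + (\mp 2)\cdot 2^j$, i.e. rewrite the pair $(\bfd^{(j+1)},\bfd^{(j)})$ by adding $\pm 1$ into component $k$ of position $j+1$ and flipping the sign of that component of position $j$ (so $\pm 2 \mapsto \mp 2$, which still has absolute value $2$ — that alone is not progress). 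The actual reduction must instead \emph{cancel}: the point of choosing the most significant such position is that at positions above $j$ all digits lie in $\{-1,0,1\}$, so after pushing a unit carry up one or more steps it lands in a $\{-1,0,1\}$-digit (or a $0$) and merges there without creating a new $2$. I would therefore argue position-by-position from $j$ upward: a carry $\varepsilon\in\{-1,1\}$ arriving in component $k$ at a position whose current digit is $0,\mp1$ absorbs immediately (becoming $\varepsilon$ or $0$), while if it is $\pm 1$ (same sign as $\varepsilon$) it produces a local $\pm 2$ which we again split as above, moving a carry one position further up; since the word is finite this terminates, at worst spilling into a new top position.

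The second thing to check at every local step is that $\weight_1$ does not increase. This is the heart of the bookkeeping. Writing $w^{(i)} := \max\{|c^{(i)}_1|,|c^{(i)}_2|\}$ for the contribution of position $i$, the rewriting at a single position changes at most the contributions of positions $i$ and $i+1$. The favourable cases are when the carry is absorbed into a zero digit or cancels a $\mp 1$: then position $i$ loses its $2$ (contribution drops by at least $1$) and position $i+1$ gains at most $1$ in one component, so $w^{(i+1)}$ grows by at most $1$, and the net change is $\le 0$. One must also use that the \emph{other} component of position $i$ is unchanged and has absolute value $\le 1 < 2$, so destroying the $2$ in component $k$ genuinely reduces $w^{(i)}$ unless the other component was also $\pm 2$ — but if both components of $\bfd^{(j)}$ are $\pm 2$ we simply perform the split in both components, and the same estimate goes through with the net change still $\le 0$ (position $j+1$ gains at most $1$ in each component, position $j$'s contribution drops from $2$ to $0$). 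After finitely many such net-nonincreasing steps the single chosen occurrence of a $\pm 2$ at position $j$ is gone, no new $\pm 2$ remains, and all digits are still in $\{-2,-1,0,1,2\}^2$ (a transient $2$ in a component is always immediately resplit), so the total count of digit vectors containing a $2$ has dropped by exactly one, giving $k' \le k-1 < k$; the value is preserved by construction.

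The main obstacle I expect is the case analysis for the carry absorption and the accompanying $\weight_1$ accounting when the two components interact — in particular making sure that a carry which has to travel several positions never forces a net increase, and handling the boundary where it reaches position $\ell-1$ and needs the new position $\ell$. The clean way to avoid a sprawling case distinction is to observe that along the entire ``carry chain'' from $j$ up to its stopping position $j'$, every intermediate position had digit $\pm 1$ (in component $k$) before and digit $\pm 1$ (opposite sign, i.e. $0$ after the $2$-split… ) — more carefully, one shows each intermediate position keeps $w^{(i)} \le 1$ throughout and only the top of the chain can gain, by at most $1$, while position $j$ loses at least $1$; the telescoping then gives the inequality in one line rather than case by case. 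I would phrase the whole argument as a single induction on $\ell$ or on the position $j'$ where the carry stops, which also cleanly handles the spill into position $\ell$.
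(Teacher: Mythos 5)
Your plan is sound in outline, but it organizes the argument quite differently from the paper. The paper proves the lemma by induction on $\weight_1$: it picks the maximal $j$ such that $\bfd^{(j)}$ contains a digit of absolute value $2$, writes $\bfd^{(j)}=2\bfq+\bfr$ with $\bfq\in\{-1,0,1\}^2$, $\bfr\in\{0,1\}^2$, replaces position $j$ by $\bfr$ and adds the whole carry vector $\bfq$ to position $j+1$, accepting that a digit of absolute value $2$ may reappear there; since position $j$ contributed $2$ to $\weight_1$ while the carry increases position $j+1$ by at most $1$, the upper word $\bfd^{(\ell-1)}\ldots\bfd^{(j+2)}(\bfd^{(j+1)}+\bfq)$ has $\weight_1$ at most $\weight_1(\bfd^{(\ell-1)}\ldots\bfd^{(0)})-1$, the induction hypothesis cleans it up, and concatenating with $\bfr\,\bfd^{(j-1)}\ldots\bfd^{(0)}$ gives the weight bound in two lines. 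You replace this induction by an explicit carry propagation through the $\{-1,0,1\}$ digits above $j$ until absorption or spill-over; that is a legitimate alternative (the paper's induction performs the same propagation implicitly), but it forces exactly the case analysis you anticipate, which the paper's formulation avoids entirely.

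Two points in your sketch need repair before it is a proof. First, the identity $\pm2=\pm4\mp2$ is the wrong local move: it leaves a digit of absolute value $2$ at the position being treated, so if ``split as above'' in your propagation step means this identity, no $2$ is ever eliminated and new digits $\mp2$ are strewn along the chain. The move you need (and which your later sentences tacitly assume) is $\pm2=2\cdot(\pm1)+0$: the treated component becomes $0$ and a carry $\pm1$ moves up; at position $j$ this is exactly the paper's decomposition $\bfd^{(j)}=2\bfq+\bfr$. Second, the $\weight_1$ bookkeeping as stated is shaky: per-step monotonicity only holds if all components of absolute value $2$ in a vector are split simultaneously (splitting one of two $2$'s in the same vector is a net $+1$ step), and the claim that every intermediate position keeps contribution at most $1$ throughout is false because of transient $2$'s. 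The clean finish is the initial-versus-final comparison you hint at: above $j$, a position can increase its contribution only if it was $\bfnull$ beforehand; such a position necessarily absorbs any carry reaching it, so the number of increasing positions (each by exactly $1$, including a possible new top position) is at most the number of carries launched at $j$, and in every case --- one component equal to $\pm2$, or both --- that number is at most the drop of the contribution at position $j$. Hence $\weight_1$ does not increase, the value is preserved, and the count of vectors containing a digit of absolute value $2$ drops by one, as required.
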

\begin{proof}
  We prove the lemma by induction on
  $\weight_1(\bfd^{(\ell-1)}\ldots\bfd^{(0)})$.
  Choose $j$ maximal such that $\bfd^{(j)}$ contains a digit of absolute
  value $2$. We write $\bfd^{(j)}=2\bfq+\bfr$ with $\bfq\in \{-1,0,1\}^2$ and
  $\bfr\in \{0,1\}^2$. We have 
  $\weight_1(\bfd^{(\ell-1)}\ldots\bfd^{(j+2)}(\bfd^{(j+1)}+\bfq))\le
  \weight_1(\bfd^{(\ell-1)}\ldots\bfd^{(0)})-1$ and
  $\bfd^{(\ell-1)}\ldots\bfd^{(j+2)}(\bfd^{(j+1)}+\bfq)$ is an expansion with
  digits from $\{-2,-1,0,1,2\}$, where digits of absolute value $2$ can only occur
  in $(\bfd^{(j+1)}+\bfq)$. Thus, by induction hypothesis, there is an
  expansion $\bfc^{(\ell'-1)}\ldots \bfc^{(j+1)}$ with digits from
  $\{-1,0,1\}$, $\val(\bfc^{(\ell'-1)}\ldots
  \bfc^{(j+1)})=\val(\bfd^{(\ell-1)}\ldots\bfd^{(j+2)}(\bfd^{(j+1)}+\bfq))$ and
  \begin{equation*}
    \weight_1(\bfc^{(\ell'-1)}\ldots
    \bfc^{(j+1)})\le
    \weight_1(\bfd^{(\ell-1)}\ldots\bfd^{(j+2)}(\bfd^{(j+1)}+\bfq)).
  \end{equation*}
  Setting
  $\bfc^{(j)}\bfc^{(j-1)}\ldots\bfc^{(0)}=\bfr \bfd^{(j-1)}\ldots
  \bfd^{(0)}$, we see that 
  \begin{align*}
    \weight_1(\bfc^{(\ell'-1)}\ldots \bfc^{(0)})&\le
    \weight_1(\bfd^{(\ell-1)}\ldots\bfd^{(j+2)}(\bfd^{(j+1)}+\bfq))+
    \weight_1(\bfr \bfd^{(j-1)}\ldots \bfd^{(0)})\\
    &\le
    \weight_1(\bfd^{(\ell-1)}\ldots\bfd^{(j+2)}\bfd^{(j+1)})+1+1+
    \weight_1(\bfd^{(j-1)}\ldots \bfd^{(0)})\\
    &=\weight_1(\bfd^{(\ell-1)}\ldots\bfd^{(0)})
  \end{align*}
  and that $\bfc^{(\ell'-1)}\ldots \bfc^{(0)}$ satisfies the requirements of the lemma.
\end{proof}

We are now able to prove Theorem~\ref{th:extended-optimality}.

\begin{proof}[Proof of Theorem~\ref{th:extended-optimality}]
  Repeated application of Lemma~\ref{le:digit-2-reduction} shows that 
  there  is an
  expansion $\bfc^{(\ell'-1)}\ldots\bfc^{(0)}$ with digits from $\{-1,0,1\}$ with 
  $\val(\bfc^{(\ell-1)}\ldots\bfc^{(0)})=\bfn$ and
  \begin{equation*}
    \weight_1(\bfd^{(\ell-1)}\ldots\bfd^{(0)})\ge\weight_1(\bfc^{(\ell-1)}\ldots\bfc^{(0)}).  
  \end{equation*}

  Taking into account that
  $\weight_1(\bfc^{(\ell-1)}\ldots\bfc^{(0)})=\weight(\bfc^{(\ell-1)}\ldots\bfc^{(0)})$ and the optimality of the SJSF \cite[Theorem~2]{Grabner-Heuberger-Prodinger:2004:distr-results-pairs} completes the proof of the theorem.
\end{proof}

The next question is how Algorithm~\ref{alg:Wu-Lou-Lai-Chang} compares with the
simple strategy of directly using $(\zp{\NAF(n_1)}{\NAF(n_2)})$. 

\begin{theorem}\label{th:weight-difference}
  Let $b_{\ell-1}\ldots b_0$ be a standard binary expansion and
  $\widehat{b_{\ell-1}}\ldots \widehat{b_0}$ its complement. Then
  \begin{equation*}
    \abs[normal]{\weight(\NAF(\val(b_{\ell-1}\ldots
      b_0)))-\weight(\NAF(\val(\widehat{b_{\ell-1}}\ldots \widehat{b_0})))}\le 2.
  \end{equation*}
\end{theorem}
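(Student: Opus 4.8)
The plan is to reduce the two-sided bound to a one-sided one and then exhibit a short signed-digit expansion. Put $m=\val(b_{\ell-1}\ldots b_0)$ and $\hat m=\val(\widehat{b_{\ell-1}}\ldots\widehat{b_0})$. Since complementing the word $\widehat{b_{\ell-1}}\ldots\widehat{b_0}$ returns $b_{\ell-1}\ldots b_0$, the quantity $\abs{\weight(\NAF(m))-\weight(\NAF(\hat m))}$ is symmetric under interchanging the two words, so it suffices to prove $\weight(\NAF(\hat m))\le\weight(\NAF(m))+2$. Because the NAF minimises the Hamming weight over all signed-digit expansions (Reitwiesner~\cite{Reitwiesner:1960}), this in turn follows once we exhibit \emph{some} expansion of $\hat m$ with digits in $\{-1,0,1\}$ and Hamming weight at most $\weight(\NAF(m))+2$.

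To build such an expansion I would start from the complement identity $\hat m=2^\ell-1-m$ recorded in Section~\ref{sec:digit-expansions}, and from the fact that $\NAF(m)$ has length at most $\ell+1$, so it can be written as $e_\ell e_{\ell-1}\ldots e_0$ after padding with leading zeros if needed. Here $e_\ell\in\{0,1\}$, since the most significant nonzero digit of the NAF of a nonnegative integer is $+1$. Negating all digits gives an expansion of $-m$ of the same weight; adding to it the expansion of $2^\ell-1$ having digit $1$ at position $\ell$, digit $-1$ at position $0$, and $0$ elsewhere, and collecting digits position by position, yields an expansion of $\hat m=-m+(2^\ell-1)$ with digit $1-e_\ell$ at position $\ell$, digit $-e_j$ at each position $j$ with $1\le j\le\ell-1$, and digit $-e_0-1$ at position $0$.

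Every digit of this expansion already lies in $\{-1,0,1\}$, the sole exception being the digit $-e_0-1$ at position $0$, which equals $-2$ exactly when $e_0=1$. In that case non-adjacency of $\NAF(m)$ forces $e_1=0$, so the two lowest digits are $(0,-2)$; replacing them by $(-1,0)$ keeps the value, puts all digits in $\{-1,0,1\}$, and does not raise the weight. A direct count of nonzero positions of the resulting expansion then gives weight at most $\weight(\NAF(m))+2$: the surplus of $2$ comes from the two boundary digits newly created at positions $\ell$ and $0$, with a strict gain whenever $e_\ell$ or $e_0$ was already nonzero. That establishes the one-sided bound and hence the theorem.

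I expect the only real work to be this bookkeeping at the ends of the word: checking $e_\ell\in\{0,1\}$, handling the carry out of position $0$ when $e_0=1$, counting nonzero positions carefully, and disposing of the degenerate cases $m=0$ and $\ell=1$. None of these is a genuine obstacle and no delicate estimate is involved; the argument is entirely combinatorial.
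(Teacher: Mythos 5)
Your argument is correct, and it takes a genuinely different route from the paper. You reduce the two-sided bound by symmetry to the one-sided estimate $\weight(\NAF(\hat m))\le\weight(\NAF(m))+2$, and prove that by invoking Reitwiesner's minimality of the NAF after exhibiting an explicit signed-digit expansion of $\hat m=2^\ell-1-m$, obtained by negating $\NAF(m)=e_\ell\ldots e_0$ and absorbing the two boundary digits of $2^\ell-2^0$; the only digit that can leave $\{-1,0,1\}$ is $-e_0-1=-2$, and non-adjacency ($e_1=0$) lets you rewrite $(0,-2)$ as $(-1,0)$ without raising the weight, so the surplus over $\weight(\NAF(m))$ is at most the two boundary positions $\ell$ and $0$. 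I checked the count in both cases ($e_0=1$ and $e_0\ne1$) and it closes; the one genuinely degenerate situation is $\ell=1$ with $e_0=1$, where position $1$ is the top position and carries $1-e_\ell$ rather than $-e_1$, but there $\hat m=0$ and the bound is trivial, as you anticipate in your list of edge cases. The paper instead proves the theorem with a transducer (Figure~\ref{fig:double-NAF-transducer}) that reads the binary word from right to left and emits the NAFs of $m$ and $\hat m$ simultaneously, bounding the weight difference via its strongly connected components. Your proof is more elementary and self-contained (no automaton needs to be constructed and verified), while the paper's transducer is not only a proof device for this theorem but also the machinery reused for the probabilistic analysis in Theorem~\ref{th:markov-chain-analysis}, where per-digit distributional information is needed that your construction does not provide.
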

This means that the strategy of taking the NAF of the complement of a number
at best induces a saving of $2$ in the weight, which is subsequently lost when
adding the two corrective terms. In other words, this strategy never yields a
lower weight than a direct use of the NAF.

\begin{proof}[Proof of Theorem~\ref{th:weight-difference}]
  The NAF can be computed from the standard binary expansion of a positive
  integer $n$ by a transducer automaton from right to left (cf.\
  \cite[Figure~2]{Heuberger-Prodinger:2006:analy-alter}), reproduced here as
  Figure~\ref{fig:NAF-transducer}. For typographical reasons, negative digits
  $-d$ are written as $\bar d$.

  \begin{figure}[htbp]
    \centering
    \includegraphics{muir1.1}
    \caption{Transducer for converting the standard binary expansion to the NAF.}
    \label{fig:NAF-transducer}
  \end{figure}

  In order to compare the NAFs of $n$ and its complement, we compute these NAFs
  simultaneously by one transducer. This transducer is shown in
  Figure~\ref{fig:double-NAF-transducer}. Here, $\bot$ denotes the end of the
  input and $\varepsilon$ denotes the empty word. 

\begin{figure}[htbp]
  \centering
  \includegraphics{DoubleNAFTransducer.1}
  \caption{Transducer converting a standard binary expansion $d_{\ell-1}\ldots
    d_0$ into $\left(\zp{\NAF(\val(d_{\ell-1}\ldots
        d_0))}{\NAF(\val(\widehat{d_{\ell-1}}\ldots
        \widehat{d_0}))}\right)$, i.e., the NAF and
    the NAF of the complement.}
  \label{fig:double-NAF-transducer}
\end{figure}

  The transducer reads the standard binary expansion of $n$ from right to left
  and writes vectors of digits containing the NAF of $n$ and its
  complement. The labels of the states correspond to carries, the ``binary
  point'' indicates the look-ahead, i.e., the number of digits read minus the
  number of digits written. The transducer can be decomposed in four
  strongly connected components: $\calC_1=\{\zp00\}$ (the initial state only),
  $\calC_2=\{.\zp{0}{1}, .\zp{1}0 \}$, $\calC_3=\{.\zp{0}{2},
  .\zp{1}{1}, .\zp{2}0 \}$ and $\calC_4=\{\text{terminal state} \}$.

  When leaving $\calC_1$, the weights of the two output rows are trivially
  equal. After leaving $\calC_2$, the difference of the weights is at most $1$,
  as there is only one cycle in $\calC_2$ and its weights are balanced. Within
  $\calC_3$, the weight of the output in both rows is always the same. When leaving
  $\calC_3$, another difference of at most $1$ might occur. Summing up, the
  weight difference is at most $2$.
\end{proof}

We can now quantify the performance of Algorithm~\ref{alg:Wu-Lou-Lai-Chang}:

\begin{theorem}\label{th:markov-chain-analysis}
  For a random pair $(n_1,n_2)$ with $0\le n_1,n_2<2^\ell$ (all of these pairs
  are considered to be equally likely), the expected number
  of multiplications when executing Algorithm~\ref{alg:Wu-Lou-Lai-Chang} is
  \begin{equation*}
    \frac{14}9\ell+O(1)=1.555\ldots\ell+O(1).
  \end{equation*}
\end{theorem}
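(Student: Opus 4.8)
The plan is to set up a Markov chain that, reading the standard binary expansions of $n_1$ and $n_2$ simultaneously from right to left, tracks exactly the information needed to produce the output digit vectors $\bfd^{(j)}$ of Algorithm~\ref{alg:Wu-Lou-Lai-Chang}, and then to compute the expected value of $\weight_1$ per input bit via the stationary distribution. First I would fix the two binary expansions of length $\ell$; the decision in Line~4 of the algorithm depends on whether $\weight(\Binary(n_k))>\ell/2$, which is a global (non-local) condition, so the first step is to argue that this is not an obstacle: for a random integer below $2^\ell$ the weight is $\ell/2+O(\sqrt\ell)$ with high probability, and on the rare event that the weight is close to the threshold the contribution to the expectation is $O(1)$ (indeed the two branches differ by $O(1)$ in weight by Theorem~\ref{th:weight-difference} applied componentwise). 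Hence, up to an $O(1)$ error, it suffices to analyse the algorithm conditioned on a fixed choice of branch for each of $k=1,2$ — i.e., for each of the (at most) four combinations "NAF of $n_k$" vs.\ "NAF of the complement of $n_k$" — and each such combination amounts to running a fixed transducer of the type in Figure~\ref{fig:NAF-transducer} on each coordinate.

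Next I would build the joint transducer/Markov chain. Each coordinate's NAF transducer has a bounded carry state (the states $.\zp{}{}$ appearing in the proof of Theorem~\ref{th:weight-difference}); running two copies in parallel on independent uniform random bit streams gives a finite-state Markov chain on pairs of carry states, driven by the uniform i.i.d.\ input pair $(b_1^{(j)},b_2^{(j)})\in\{0,1\}^2$. The output at each step is a digit vector $\bfd^{(j)}\in\{-1,0,1\}^2$ (the boundary corrections at positions $0$ and $\ell$ only change $\weight_1$ by $O(1)$ and are absorbed into the error term). I would then write down the transition matrix explicitly, check that the chain is irreducible and aperiodic on its recurrent class, solve $\pi P=\pi$ for the stationary vector $\pi$, and define the one-step reward $r(\text{state},\text{input})=\max\{|d_1|,|d_2|\}$ where $\bfd=(\zp{d_1}{d_2})$ is the digit emitted. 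By the ergodic theorem for Markov chains the expected total reward over $\ell$ steps is $\bigl(\sum \pi\cdot(\text{expected reward})\bigr)\ell+O(1)$, and adding the $\ell$ squarings and subtracting the $1$ for the leading digit yields the expected number of multiplications. The claim is that this rate works out to $14/9$; concretely, since by Theorem~\ref{th:weight-difference} the complement branch produces essentially the same NAF weight as the direct branch, all four branch-combinations give the same rate, namely the expected $\weight_1$-rate of $(\zp{\NAF(n_1)}{\NAF(n_2)})$, which is known to be $5/9$ from \cite{Grabner-Heuberger-Prodinger-Thuswaldner:2005:analy-linear}; then $\ell + 5\ell/9 - 1 = 14\ell/9 + O(1)$.

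The cleanest route, therefore, is probably not to grind out a bespoke Markov chain at all but to reduce directly to the known result: Theorem~\ref{th:weight-difference} shows $|\weight(\NAF(n_k)) - \weight(\NAF(\overline{n_k}))|\le 2$, so whichever branch the algorithm picks, $\weight(\bfd^{(\ell)}_k\ldots\bfd^{(0)}_k)$ equals $\weight(\NAF(n_k))$ up to an additive constant $\le 3$ (the extra $\le 1$ from the two corrective terms in Lines~6--7). Since the digit vectors have entries in $\{-1,0,1\}$ except possibly at position $0$, $\weight_1$ and $\weight$ of the joint expansion agree up to $O(1)$, and the joint weight of $(\zp{\NAF(n_1)}{\NAF(n_2)})$ has expectation $(5/9)\ell+O(1)$ by \cite{Grabner-Heuberger-Prodinger-Thuswaldner:2005:analy-linear}. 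Adding $\ell$ for the squarings gives $(14/9)\ell+O(1)$.

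The main obstacle I anticipate is the global weight-threshold condition in Line~4: one must show rigorously that conditioning on it does not shift the expectation by more than $O(1)$. This needs a concentration estimate for the binomial weight together with the coupling "on the event $\weight>\ell/2$, the complement branch and a hypothetical direct-NAF branch differ by $O(1)$"; the subtlety is that the event $\{\weight(\Binary(n_k))>\ell/2\}$ is correlated with the bit pattern and hence with $\weight(\NAF(n_k))$, so one cannot just say "it happens with probability $1/2$ and contributes nothing." The resolution is that the \emph{difference} between what the algorithm does and the direct-NAF strategy is uniformly $O(1)$ pointwise (by Theorem~\ref{th:weight-difference}), so no independence is needed — the expectations of the two strategies differ by $O(1)$ regardless of how the conditioning event correlates with anything. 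A secondary, purely mechanical point is verifying the $5/9$ figure or, if a self-contained computation is preferred, carrying out the stationary-distribution calculation for the explicit joint-NAF Markov chain; that is routine linear algebra on a small matrix.
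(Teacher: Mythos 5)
Your overall strategy (reduce the algorithm's cost to the expected joint weight $(5/9)\ell+O(1)$ of two independent NAFs, then add the $\ell$ squarings) aims at the right target, and you correctly sense that the weight-threshold test in Line~4 must be handled by a pointwise comparison rather than an independence claim. But both of your routes hinge on the same unjustified step: you pass from Theorem~\ref{th:weight-difference}, which bounds the difference of the \emph{row} weights $\abs[normal]{\weight(\NAF(n_k))-\weight(\NAF(\overline{n_k}))}\le 2$, to the conclusion that the \emph{joint} weight $\weight_1$ of the algorithm's output differs by $O(1)$ from that of $(\zp{\NAF(n_1)}{\NAF(n_2)})$. That implication is false in general: the joint weight is determined by which \emph{positions} carry non-zero digits in at least one row, not by how many non-zeros each row has. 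Replacing $\NAF(n_1)$ by another expansion of the same value and the same weight can shift the non-zero positions and change the overlap with the second row, and hence the joint weight, by $\Theta(\ell)$ a priori. The same gap undermines your Markov-chain route: conditioned on $\weight(\Binary(n_k))>\ell/2$ the input bits are no longer i.i.d.\ uniform, and your proposed fix (``the two strategies differ pointwise by $O(1)$'') again only controls row weights, not $\weight_1$.

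What is needed — and what the paper actually proves — is the stronger positional statement: reading the binary expansion through the transducer of Figure~\ref{fig:double-NAF-transducer}, once the chain enters the recurrent component $\calC_3$ the two output rows ($\NAF$ of $n$ and $\NAF$ of its complement) are zero or non-zero \emph{simultaneously}, and the transient states are occupied at step $k$ only with probability $2^{-k}$. Hence $\prob{d_k^{(j)}=0}=2/3+O(2^{-j})$ regardless of which branch Line~4 selects, which completely neutralises the correlation between the branch choice and the bit pattern; independence of $n_1$ and $n_2$ then gives $\prob{\bfd^{(j)}=\bfnull}=4/9+O(2^{-j})$, so the expected number of zero columns is $(4/9)\ell+O(1)$, the expected joint weight is $(5/9)\ell+O(1)$, and the count of multiplications is $(14/9)\ell+O(1)$ without ever invoking the $(5/9)\ell$ result for independent NAFs as a black box. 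To repair your argument you would have to extract this zero-position coincidence (e.g.\ from the same transducer you already cite for Theorem~\ref{th:weight-difference}); the weight bound of Theorem~\ref{th:weight-difference} alone cannot do the job.
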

\begin{proof}
  Before considering pairs, it is essential to understand the effect of
  Algorithm~\ref{alg:Wu-Lou-Lai-Chang} on a single integer, which is encoded by
  the transducer automaton in Figure~\ref{fig:double-NAF-transducer}.
  We number the states of the transducer as follows:
  \begin{equation*}
    \begin{array}{c|*{6}{c}}
      \text{number}&1&2&3&4&5&6\\\hline
      \text{state}\rule{0pt}{14pt} &\zp{0}{0}& .\zp{0}{1}& .\zp{1}{0}& .\zp{0}{2}& .\zp{2}{0}& .\zp{1}{1}
    \end{array}
  \end{equation*}
  The transition probability matrix of the transducer is
  \begin{equation*}\renewcommand{\arraystretch}{1.2}
    P=
    \begin{pmatrix}
      0 & \frac{1}{2} & \frac{1}{2} & 0 & 0 & 0 \\
      0 & 0 & \frac{1}{2} & \frac{1}{2} & 0 & 0 \\
      0 & \frac{1}{2} & 0 & 0 & \frac{1}{2} & 0 \\
      0 & 0 & 0 & \frac{1}{2} & 0 & \frac{1}{2} \\
      0 & 0 & 0 & 0 & \frac{1}{2} & \frac{1}{2} \\
      0 & 0 & 0 & \frac{1}{2} & \frac{1}{2} & 0
    \end{pmatrix},
  \end{equation*}
  i.e., the entry in row $i$, column $j$, is the probability of a transition
  from state $i$ to state $j$. These are $0$ or $1/2$ depending on whether
  there is a transition from $i$ to $j$ at all; the digits of the standard binary
  expansion are independently uniformly distributed.

  The probability of reaching state $j$ after reading $k$ digits is the $j$th
  component of
  \begin{equation*}
    (1, 0, 0, 0, 0, 0)P^k=
 \left( 0,2^{-k},2^{-k},\frac{1}{3}+O(2^{-k}),\frac{1}{3}+O(2^{-k}),\frac{1}{3}+O(2^{-k})\right).
  \end{equation*}
  When leaving States~$4$ or $5$, the transducer writes a digit $0$, when
  leaving State~6, a non-zero digit is written. The situation in States~2 and 3
  is more complicated as it depends on the weight of the standard binary
  expansion, however, since we are in these states with probability $2^{-k}$,
  we do not have to deal with this problem. Summing up, the probability that
  the transducer writes a digit $0$ as the $(k-1)$st output digit is
  $p_{k-2}:=2/3+O(2^{-k})$.

  Let now $\bfd^{(\ell)}\ldots\bfd^{(0)}$ be the joint expansion of
  $(n_1,n_2)$ produced by Algorithm~\ref{alg:Wu-Lou-Lai-Chang}, where $n_1$ and $n_2$ are independent and uniformly distributed
  random variables on $\{0,\ldots,2^\ell-1\}$. Denote by
  $\zeros(\bfd^{(\ell)}\ldots\bfd^{(0)})$ the number of digit vectors $\bfnull$ in
  $\bfd^{(\ell)}\ldots\bfd^{(0)}$, which implies that
  $\zeros(\bfd^{(\ell)}\ldots\bfd^{(0)})=\ell+1-\weight(\bfd^{(\ell)}\ldots\bfd^{(0)})$.
  Then the expectation of $\zeros(\bfd^{(\ell)}\ldots\bfd^{(0)})$ can be computed as
  \begin{align*}
    \expect{\zeros(\bfd^{(\ell)}\ldots\bfd^{(0)})}
    &=\sum_{k=0}^{\ell}\prob{\bfd^{(k)}=\bfnull}
    = \sum_{k=0}^{\ell} \prob{d_1^{(k)}=0}\prob{d_2^{(k)}=0}\\
    &=\sum_{k=0}^{\ell} p_k^2
    =\sum_{k=0}^{\ell} \left(\frac49+O(2^{-k})\right)
    =\frac49\ell+O(1), 
  \end{align*}
  where we used the fact that the random variables $d_1^{(k)}$ and $d_2^{(k)}$
  are independent (which is a consequence of the fact that $n_1$ and $n_2$ have
  been assumed to be independent). From this, we see that 
  \begin{equation*}
    \expect{\weight(\bfd^{(\ell)}\ldots\bfd^{(0)})}
    =\ell+1-\expect{\zeros(\bfd^{(\ell)}\ldots\bfd^{(0)})}
    =\frac59\ell+O(1), 
  \end{equation*}
  and the results follows by adding the unavoidable $\ell$ squarings.
\end{proof}
\begin{remark}
  This proof can easily be generalised to higher dimensions. In dimension $D$,
  we obtain 
  \begin{equation*}
    \expect{\weight(\bfd^{(\ell)}\ldots\bfd^{(0)})}
    =\biggl(1-\left(\frac23\right)^D\biggr)\ell+O(1).
  \end{equation*}
  On the other hand, the approach in
  \cite{Grabner-Heuberger-Prodinger-Thuswaldner:2005:analy-linear} can be
  generalised to explicitly give the constants now hidden in the error term at
  the cost of more complicated transducers and a delicate analysis of the
  influence of the weight of the standard binary expansion, cf.\
  \cite{Heuberger-Prodinger:2007:hammin-weigh}.
\end{remark}

\section{Errors in \cite{Wu-Lou-Lai-Chang:2007:fast-modul} and
  \cite{Sun-Huai-Sun-Zhang:2007:comput-effic}}\label{sec:errors}
The purpose of this section is to point out where the errors in
\cite{Wu-Lou-Lai-Chang:2007:fast-modul} and
\cite{Sun-Huai-Sun-Zhang:2007:comput-effic} occurred.

The authors of \cite{Wu-Lou-Lai-Chang:2007:fast-modul} write on page~1072 in
Section~4:
\begin{quotation}
  ``Besides, the average proportion of non-zeros in binary representation is
  $\frac12$ and in canonical-signed-digit binary representation is
  $\frac13$. So the average proportion of zeros in the proposed algorithm is
  $(1-\frac12\times\frac13)=\frac56$.''
\end{quotation}
This assertion is erroneous, because the multiplication of $1/2$ and $1/3$
cannot be justified in any way: The weight of a NAF is roughly $1/3$ times the
length of the expansion, not $1/3$ times the weight of the standard binary
expansion. Moreover, the weights of the NAF and the standard binary expansion
are only asymptotically independent,
cf. \cite{Heuberger-Prodinger:2007:hammin-weigh}.

The authors of \cite{Sun-Huai-Sun-Zhang:2007:comput-effic} write on page~1851:
\begin{quotation}
  ``Before the complement recoding, each bit of $E=(e_{k-1}\ldots e_1e_0)_2$
  assumes a value of $0$ or $1$ with equal probability,
  i.e.\ $P(e_i=0)=P(e_i=1)=1/2$ for $0\le i\le k-1$, and there is no dependency
  between any two bits. After the complement recoding, it should be a value of
  $0$ or $1$ with unequal probability, i.e.\ $P(e_i=0)=3/4$ and $P(e_i=1)=1/4$
  for $0\le i\le k-1$. Certainly, there is still no dependency between any two bits.''
\end{quotation}
By construction, there \emph{is} some dependence between the bits, as at most half of
them can be equal to $1$. Next, the claimed probabilities $3/4$ and $1/4$ are
incorrect, this has also been discussed in detail by Yen, Lien and
Moon~\cite{Yen-Lien-Moon:2007:ineff-common} while correcting erroneous claims
in \cite{Chang-Kuo-Lin:2003:fast-algor}.

\section{Conclusions}
We analysed the multi-exponentiation algorithm from
\cite{Wu-Lou-Lai-Chang:2007:fast-modul}. It turns out that the performance
estimates in \cite{Wu-Lou-Lai-Chang:2007:fast-modul} and
\cite{Sun-Huai-Sun-Zhang:2007:comput-effic} are based on incorrect
probabilistic assumptions and are wrong. The method due to
Solinas~\cite{Solinas:2001:low-weigh} or its equivalent formulations have
better performance. Even worse, taking the complement does not have any
positive effect, because the non-adjacent form essentially ignores the
influence of the complement. Thus the proposed algorithm performs as if one
would simply take the NAF for both arguments, which corresponds to the method
proposed by \cite{Dimitrov-Jullien-Miller:2000:compl-fast-algor-multiex}
(without improvements) and is known to be not optimal.



\bibliography{cheub}
\bibliographystyle{amsplain}

\end{document}

